\newtheorem{theorem}{Theorem}[section]
\newtheorem{lemma}[theorem]{Lemma}
\newtheorem{prop}[theorem]{Proposition}
\newtheorem{defi}[theorem]{Definition}
\newtheorem{rem}[theorem]{Remark}
\newtheorem*{theo}{Theorem}
\DeclareMathOperator \im{im}
\DeclareMathOperator \CR{CR}
\DeclareMathOperator \D{D}
\DeclareMathOperator \id{id}
\DeclareMathOperator \modi{mod}
\DeclareMathOperator \Hom{Hom}
\DeclareMathOperator \End{End}
\DeclareMathOperator \EIP{EIP}
\DeclareMathOperator \EKP{EKP}
\DeclareMathOperator \CJT{CJT}
\DeclareMathOperator \rad{rad}
\DeclareMathOperator \rk{rk}
\DeclareMathOperator \op{op}
\definecolor{mygray}{gray}{.50}
\begin{document}
\title{AR-components for generalized Beilinson algebras}
\author{Julia Worch}
\address{Christian-Albrechts-Universit\"at zu Kiel, Ludewig-Meyn-Str. 4, 24098 Kiel, Germany}
\email{worch@math.uni-kiel.de}

\thanks{Partly supported by the D.F.G. priority program SPP 1388  ``Darstellungstheorie''\\
{\it 2010 Mathematical Subject Classification}: 16G20, 16G70 (primary), 16S90, 16S37 (secondary)}

\begin{abstract}
We show that the generalized $W$-modules defined in \cite{w12} determine $\mathbb{Z}A_{\infty}$-components in the Auslander-Reiten quiver $\Gamma(n,r)$ of the generalized Beilinson algebra $B(n,r)$, $n \geq 3$. These components entirely consist of modules with the constant Jordan type property. We arrive at this result by interpreting $B(n,r)$ as an iterated one-point extension of the $r$-Kronecker algebra $\mathcal{K}_r$ which enables us to generalize findings concerning the Auslander-Reiten quiver $\Gamma(\mathcal{K}_r)$ presented in \cite{w12} to $\Gamma(n,r)$.
\end{abstract}

\maketitle

\section*{Introduction}

Motivated by work of Carlson, Friedlander, Pevtsova and Suslin on modular representations of elementary abelian $p$-groups (cf. \cite{cafrpe08}, \cite{cfs09}), we introduced in a foregoing paper \cite{w12} modules with the equal images property, the equal kernels property and modules of constant Jordan type for the generalized Beilinson algebra $B(n,r)$, the path algebra of the quiver
$$\begin{xy}
\xymatrix{
1 \ar@/^1pc/[r]^{\gamma_1} \ar@/_1pc/[r]_ {\gamma_r} \ar@{}[r]|{\vdots} & 2 \ar@/^1pc/[r]^{\gamma_1} \ar@/_1pc/[r]_ {\gamma_r}  \ar@{}[r]|{\vdots} & 3 \ar@{}[r]|{ \cdots } & n-1 \ar@/^1pc/[r]^{\gamma_{1}} \ar@/_1pc/[r]_ {\gamma_{r}}  \ar@{}[r]|{\vdots} & n }
\end{xy}$$
modulo commutativity relations $\gamma_i\gamma_j=\gamma_j\gamma_i$. These classes are defined such that a faithful exact functor $\mathfrak{F} \colon \modi B(n,r) \rightarrow \modi kE_r$ maps a $B(n,r)$-module with one of the above properties to a module that satisfies the respective property over an elementary abelian $p$-group $E_r$ of rank $r \geq 2$ \cite[2.3]{w12}. We moreover gave a generalization of the so-called $W$-modules, a special class of $kE_2$-modules with the equal images property defined in \cite{cfs09} via generators and relations, to elementary abelian $p$-groups of arbitrary rank and showed that these modules might as well be considered modules with the equal images property over $B(n,r)$ via $\mathfrak{F}$.\\

The Auslander-Reiten quiver constitutes an important invariant of the Morita equivalence class of an algebra. While in general it is hard to compute the components of this quiver, one has a good knowledge of the Auslander-Reiten components for group algebras of finite groups and hereditary algebras. In particular, by work of Ringel \cite{ri78} and Erdmann \cite{erd90}, the regular Auslander-Reiten components of wild hereditary algebras and of $p$-elementary abelian groups are of tree class $A_{\infty}$. On the contrary, not much is known about the Auslander-Reiten theory of generalized Beilinson algebras. These algebras, however, constitute an interesting class of algebras of global dimension $\geq 2$ for $n \geq 3$. \cite[3.7]{w13}.\\
In the present paper, we interpret $B(n,r)$ as an iterated one-point extension of the hereditary path algebra $B(2,r)$ of the $r$-Kronecker by duals of generalized $W$-modules. As a consequence, we can make use of certain lifting properties of Auslander-Reiten sequences \cite[2.5]{ri84} in combination with torsion theoretic arguments to generalize our findings from \cite{w12} as follows:

\begin{theo} Let $r \geq 2$, $m>n \geq 2$. 
\begin{enumerate}[(i)]
\item If $(n,r) \neq (2,2)$, then the generalized $W$-module $W_{m,n}^{(r)}$ is a quasi-simple module in a $\mathbb{Z}A_{\infty}$-component $\mathcal{C}_m$ of $\Gamma(n,r)$ which contains two disjoint cones: one consisting of modules with the equal images property and one consisting of modules with the equal kernels property. Moreover, all modules in $\mathcal{C}_m$ have constant Jordan type.
\item If $r>2$, then the two cones in (i) are adjacent. In case $r=2$, there is exactly one quasi-simple module that neither has the equal images nor the equal kernels property.
\end{enumerate}
\end{theo}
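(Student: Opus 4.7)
The overall plan is to proceed by induction on $n$, with the base case $n=2$ reducing to the theorem from \cite{w12} about the $r$-Kronecker algebra $\mathcal{K}_r = B(2,r)$. The hypothesis $(n,r)\neq (2,2)$ ensures that in the base case $r\geq 3$, so $\mathcal{K}_r$ is wild hereditary and the AR-components of regular modules are genuinely of type $\mathbb{Z}A_\infty$ (with the $W$-module $W_{m,2}^{(r)}$ quasi-simple at the mouth of such a component, as established in the previous paper). For the inductive step, I would use the presentation of $B(n,r)$ as the one-point extension $B(n-1,r)[M]$ by a suitable dual of a generalized $W$-module $M$, as indicated in the introduction, and lift the $\mathbb{Z}A_\infty$-component from $\Gamma(n-1,r)$ to $\Gamma(n,r)$.

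The main technical tool is Ringel's lifting criterion \cite[2.5]{ri84}: an almost split sequence over $B(n-1,r)$ remains almost split over $B(n,r)$ once one controls the $\Hom$- and $\Ext$-contributions coming from the extension module $M$. The natural torsion pair $(\mathcal{T},\mathcal{F})$ on $\modi B(n,r)$ arising from the one-point extension, together with the fact that $\modi B(n-1,r)$ embeds fully and exactly into $\modi B(n,r)$ as the subcategory of modules vanishing on the new vertex, makes this manageable. I would verify the necessary vanishing for every module in the component $\mathcal{C}_m' \subset \Gamma(n-1,r)$ containing $W_{m,n-1}^{(r)}$, exploiting the constant Jordan type property of these modules and the explicit description of $M$ as (the dual of) a generalized $W$-module. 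This lifts $\mathcal{C}_m'$ wholesale to a $\mathbb{Z}A_\infty$-component $\mathcal{C}_m \subset \Gamma(n,r)$ in which the lift of $W_{m,n-1}^{(r)}$ is still quasi-simple and is naturally identified with $W_{m,n}^{(r)}$. The faithful exact functor $\mathfrak{F}$ then transports the EIP, EKP and constant Jordan type properties from $\mathcal{C}_m'$ to $\mathcal{C}_m$ via the $\mathfrak{F}$-stability of these classes established in \cite[2.3]{w12}, giving part (i).

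For part (ii), I would analyze the two cones ray by ray: on the EIP cone every quasi-simple at the boundary is of the form $\tau^{-k} W_{m,n}^{(r)}$ and on the EKP cone of the form $\tau^{k}(W_{m,n}^{(r)})^\ast$ (or similar), and the question is how many quasi-simples lie between them on the mouth of $\mathcal{C}_m$. This dichotomy originates in the Kronecker base case, where the combinatorics of the Coxeter transformation yields adjacent cones for $r\geq 3$ but a one-module gap for $r=2$, and this gap is preserved under each step of the inductive lifting because the lifting operates ray-wise on $\mathcal{C}_m'$.

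The principal obstacle will be the inductive step, specifically verifying that the lifted AR-component retains tree class $A_\infty$, i.e.\ that no extra irreducible maps involving the new simple or the extension module $M$ creep into $\mathcal{C}_m$ and fatten a mesh. This demands tight control of $\Hom_{B(n,r)}(M,X)$ and $\Ext^1_{B(n,r)}(M,X)$ for all $X$ in the component, which I expect to follow from the very rigid structure of the generalized $W$-modules and the compatibility of constant Jordan type with the torsion pair induced by the one-point extension — but getting these vanishings uniformly across an infinite component is the delicate part.
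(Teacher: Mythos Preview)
Your inductive framework is right in spirit but has two genuine gaps.

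First, the Hom-vanishing you need for Ringel's lifting lemma does \emph{not} hold across the entire component $\mathcal{C}_m'\subset\Gamma(n-1,r)$; it holds only on the equal-kernels subcone. Concretely, for an AR-sequence $0\to\tau N\to E\to N\to 0$ in $\modi B(n-1,r)$ to lift via $\iota_{B(n-1,r)}$ one needs $\Hom_{B(n-1,r)}(M_{n,n-1}^{(r)},\tau N)=0$, and the paper obtains this (Proposition~\ref{liftcone}) only when $\tau^2 N\in\EKP(n-1,r)$, using that $\tau M_{n,n-1}^{(r)}$ has no nonzero EKP quotient together with the AR-formula. Outside this cone the Hom-space is typically nonzero, so the lifted AR-sequence acquires a nontrivial contribution at the new vertex. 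In particular $M_{m,n}^{(r)}$ itself is \emph{not} $\iota_{B(n-1,r)}(M_{m,n-1}^{(r)})$: it has $e_0.M_{m,n}^{(r)}\neq 0$, whereas anything in the image of $\iota_{B(n-1,r)}$ vanishes at vertex~$0$. The correct relation is $\iota_{B(n-1,r)}\tau^{-1}M_{m,n-1}^{(r)}\cong\tau^{-1}M_{m,n}^{(r)}$ (Proposition~\ref{wmcomps}), and your identification ``the lift of $W_{m,n-1}^{(r)}$ is $W_{m,n}^{(r)}$'' is off by this shift. The paper therefore lifts only the subcone $(\tau^{-1}M_{m,n-1}^{(r)}\to)$ and then argues separately that the $\tau$-orbit closure of this cone is already the whole component $\mathcal{D}_m$ and forces tree class $A_\infty$; your plan to lift the component wholesale cannot work.

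Second, for $r=2$ there is no Kronecker base case: $(2,2)$ is excluded, $\mathcal{K}_2$ is tame, and the $W$-modules $W_{m,2}^{(2)}$ are preinjective, not regular in a $\mathbb{Z}A_\infty$-component. The paper establishes the base case $(3,2)$ separately (Proposition~\ref{dreizwei}) using that the regular components of $\Gamma(3,2)$ are of type $\mathbb{Z}A_\infty$ via a tilting argument, and computes $\mathcal{W}(\mathcal{C}_m)=1$ directly from the explicit formula $\tau_{B(n,2)}M_{m,n}^{(2)}\cong\D\,\iota_{B(n-1,2)}\cdots\iota_{B(2,2)}W_{m-n,2}^{(2)}$. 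So the one-module gap does not ``originate in the Kronecker base case'' and is not merely preserved by lifting; it has to be produced at $n=3$ and then tracked through the induction. (Also, the functor $\mathfrak{F}$ goes to $\modi kE_r$, not between different $B(n,r)$; it plays no role in transferring EIP/EKP/CJT along the one-point extension.)
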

For $r>2$, these components can be visualized as follows with $W=W_{m,n}^{(r)}$ being the uniquely determined generalized $W$-module of the component.
\[ \begin{xy}
\xymatrix@R=0.7em@C=0.7em{
\ddots  && \vdots && \vdots &&\vdots  && \vdots && \vdots &&\vdots && \iddots\\
\cdots &   \circ   \ar[rd]  &&  \circ   \ar[rd]   && \circ   \ar[rd]   &&  \circ  \ar[rd] &&  \circ   \ar[rd]  &&  \circ   \ar[rd] && \circ  \ar[rd]  & \cdots \\
 {\color{mygray}\bullet} \ar[rd] \ar[ru] && \circ  \ar[rd] \ar[ru] \ar@{-->}[ll] && \circ  \ar[rd] \ar[ru] \ar@{-->}[ll] && \circ  \ar[rd] \ar[ru] \ar@{-->}[ll] && \circ  \ar[rd] \ar[ru] \ar@{-->}[ll] && \circ  \ar[rd] \ar[ru] \ar@{-->}[ll] && \circ \ar[rd] \ar[ru]  \ar@{-->}[ll] &&{\bullet}  \ar@{-->}[ll] \\
  \cdots & {\color{mygray}\bullet} \ar[ru]  \ar[rd]  &&  \circ  \ar[ru]  \ar[rd] \ar@{-->}[ll]  && \circ  \ar[ru]  \ar[rd] \ar@{-->}[ll]  &&  \circ  \ar[ru]  \ar[rd] \ar@{-->}[ll] &&  \circ \ar[ru]  \ar[rd] \ar@{-->}[ll]  && \circ  \ar[ru]  \ar[rd] \ar@{-->}[ll]   && {\bullet}   \ar[rd] \ar@{-->}[ll] \ar[ru] & \cdots \\
{\color{mygray}\bullet}\ar[ru]  \ar[rd] &&   {\color{mygray}\bullet} \ar[ru]  \ar[rd] \ar@{-->}[ll] &&  \circ   \ar[ru]  \ar[rd] \ar@{-->}[ll]  &&  \circ  \ar[ru]  \ar[rd] \ar@{-->}[ll] &&  \circ   \ar[ru]  \ar[rd] \ar@{-->}[ll] &&  \circ   \ar[ru]  \ar[rd] \ar@{-->}[ll]&&  {\bullet}  \ar[rd] \ar@{-->}[ll] \ar[ru] && {\bullet}  \ar@{-->}[ll] \\
 \cdots & {\color{mygray}\bullet} \ar[ru]  \ar[rd]  &&   {\color{mygray}\bullet} \ar[ru]  \ar[rd] \ar@{-->}[ll]  &&  \circ  \ar[ru]  \ar[rd] \ar@{-->}[ll]  &&  \circ  \ar[ru]  \ar[rd] \ar@{-->}[ll] &&  \circ  \ar[ru]  \ar[rd] \ar@{-->}[ll]   && {\bullet}   \ar[ru]  \ar[rd] \ar@{-->}[ll] && {\bullet}  \ar[ru]  \ar[rd] \ar@{-->}[ll] & { \cdots} \\
{\color{mygray}\bullet} \ar[rd]  \ar[ru] && {\color{mygray}\bullet} \ar[rd]  \ar[ru] \ar@{-->}[ll] &&  {\color{mygray}\bullet} \ar[rd]  \ar[ru] \ar@{-->}[ll] && \circ  \ar[rd]  \ar[ru] \ar@{-->}[ll] && \circ  \ar[rd]  \ar[ru] \ar@{-->}[ll] &&  {\bullet} \ar[rd]  \ar[ru] \ar@{-->}[ll] && {\bullet} \ar[ru] \ar[rd] \ar@{-->}[ll] && {\bullet}  \ar@{-->}[ll] \\
 \cdots & {\color{mygray}\bullet} \ar[ru]  \ar[rd]  &&  {\color{mygray}\bullet} \ar[ru]  \ar[rd] \ar@{-->}[ll]  &&   {\color{mygray}\bullet} \ar[ru]  \ar[rd] \ar@{-->}[ll]  &&  \circ  \ar[ru]  \ar[rd] \ar@{-->}[ll] && {\bullet} \ar[ru]  \ar[rd] \ar@{-->}[ll]   && {\bullet}  \ar[ru]  \ar[rd] \ar@{-->}[ll] && {\bullet} \ar[rd] \ar[ru]  \ar@{-->}[ll] & { \cdots} \\
{\color{mygray}\bullet}  \ar[ru] &&  {\color{mygray}\bullet}   \ar[ru] \ar@{-->}[ll] &&  {\color{mygray}\bullet}   \ar[ru] \ar@{-->}[ll] &&   {\color{mygray} W}  \ar[ru] \ar@{-->}[ll] && {\bullet} \ar[ru] \ar@{-->}[ll]&&   {\bullet} \ar[ru] \ar@{-->}[ll] && {\bullet} \ar[ru] \ar@{-->}[ll]&& {\bullet} \ar@{-->}[ll]}
\end{xy}  
\]
\[\mathcal{C}_m \hspace{9 cm} \text{ Fig. 1} \]
The gray and black bullets refer to modules with the equal images property and the equal kernels property, respectively.\\

Throughout, we denote by $k$ an algebraically closed field and we let $n,r \geq 2$. We assume the reader to be familiar with the concept of an algebra given by a quiver with relations and refer to \cite{ass06} and \cite{ars} for basic notions of Auslander-Reiten theory.

\section{Module categories for $B(n,r)$ and the case $n=2$}\label{two}
In this section, we recall the module categories we defined in \cite{w12} and provide the reader with the relevant information we have on the Auslander-Reiten quiver of $B(2,r)$.\\

Let $E(n,r)$ be the path algebra of the quiver $\mathcal{Q}(n,r)$ with $n$ vertices and $r$ arrows between the vertices $i$ and $i+1$ for all $0 \leq i \leq n-2$:
$$\begin{xy}
\xymatrix{
0 \ar@/^1pc/[r]^{\gamma_1^{(0)}} \ar@/_1pc/[r]_ {\gamma_r^{(0)}} \ar@{}[r]|{\vdots} & 1 \ar@/^1pc/[r]^{\gamma_1^{(1)}} \ar@/_1pc/[r]_ {\gamma_r^{(1)}}  \ar@{}[r]|{\vdots} & 2 \ar@{}[r]|{\cdots} & n-2 \ar@/^1pc/[r]^{\gamma_{1}^{({n-2})}} \ar@/_1pc/[r]_ {\gamma_{r}^{({n-2})}}  \ar@{}[r]|{\vdots} & n-1 }
\end{xy}$$
The {\bf generalized Beilinson algebra $B(n,r)$} is the factor algebra $E(n,r)/\mathcal{I}$ where $\mathcal{I}$ is the ideal generated by the commutativity relations $\gamma^{(i+1)}_s \gamma^{(i)}_{t}-\gamma^{(i+1)}_{t}  \gamma^{(i)}_{s}$ for all $s,t \in \left\{1,\ldots,r\right\}$ and $ i \in \left\{0,\ldots,n-1\right\}$.
These algebras generalize the algebras of the form $B(n)=B(n,n)$ introduced by Beilinson in \cite{bei78}.
We denote by $S(i)$, $P(i)$, $I(i)$ and $e_i$ the simple, the projective and the injective indecomposable $B(n,r)$-module and the primitive orthogonal idempotent corresponding to the vertex $i \in \left\{0,\ldots,n-1\right\}$. For $\alpha \in k^r \backslash 0$ and $M \in \modi B(n,r)$, we consider the linear operator
$\alpha_M \colon M \rightarrow M$ given by left-multiplication with the element $\sum_{i=0}^{n-2}(\alpha_1 \gamma_1^{(i)} + \cdots + \alpha_r \gamma_r^{(i)}) \in B(n,r)$.

\begin{defi}[cf. \cite{w12}]\label{defeip} We denote by
\begin{enumerate}[(i)]
\item $\EIP(n,r) := \left\{M \in \modi B(n,r) \mid \forall  \alpha \in k^r\backslash 0:  \im(\alpha_M)=\bigoplus_{i=1}^{n-1}e_i.M  \right\}$
\item $\EKP(n,r) := \left\{M \in \modi B(n,r) \mid  \forall  \alpha \in k^r\backslash 0: \ker(\alpha_M)=e_{n-1}.M  \right\}$
\item $\CR^j(n,r) := \left\{ M \in \modi B(n,r) \mid \exists c_j \in \mathbb{N}_0 \ \forall  \alpha \in k^r\backslash 0:  \rk (\alpha_{M})^j=c_j  \right\}$
\item $\CJT(n,r):= \bigcap_{j=1}^n  \CR^j(n,r)$.
\end{enumerate}
the categories of modules with the equal images property, the equal kernels property and the categories of modules of constant $j$-rank and of constant Jordan type, respectively.
\end{defi}
Due to the fact that $B(n,r) \cong B(n,r)^{\op}$, there is a duality $\D \colon \modi B(n,r) \rightarrow \modi B(n,r)$ induced by taking the linear dual and relabelling the vertices of the quiver $\mathcal{Q}(n,r)$ in the reversed order. The functor $\D$ is compatible with the Auslander-Reiten translation $\tau$ in the sense that $\D \tau \cong \tau^{-1} \D$ \cite[3.3]{w13} and restricts to a duality between the categories $\EIP(n,r)$ and $\EKP(n,r)$. Note, moreover, that $\EIP(n,r) \cup \EKP(n,r) \subseteq \CJT(n,r)$. The following is a direct consequence of Definition \ref{defeip}.
\begin{rem}\label{equalzero} Let $M \in \EIP(n,r)$, $N \in \EKP(n,r)$.
\begin{enumerate}[(i)]
\item If $e_0.M=0$, then $M=0$.
\item If $e_{n-1}.N=0$, then $N=0$.
\end{enumerate}
\end{rem}

A homological characterization of these categories \cite[2.2.1]{w12} yields that the category $\EIP(n,r)$ is the torsion class $\mathcal{T}$ of a torsion pair $(\mathcal{T}, \mathcal{F})$ in $\modi B(n,r)$ with $\EKP(n,r) \subset \mathcal{F}$ that is closed under the Auslander-Reiten translation $\tau$ and contains all preinjective modules\cite[2.2.3]{w12}. In particular, there are no non-trivial maps $\EIP(n,r) \rightarrow \EKP(n,r)$. This yields \cite[2.9]{w12}:
\begin{theorem}\label{ZA}
Let $\mathcal{C}$ be a regular $\mathbb{Z}A_{\infty}$-component of $\Gamma(n,r)$. If $\EIP(n,r) \cap \mathcal{C} \neq \emptyset$, then either $\mathcal{C} \subseteq \EIP(n,r)$ or there exists a quasi-simple module $W_{\mathcal{C}}$ such that
 $(\rightarrow W_{\mathcal{C}})=\mathcal{C} \cap \EIP(n,r).$
Dually, if $\EKP(n,r) \cap \mathcal{C} \neq \emptyset$, then either $\mathcal{C} \subseteq \EKP(n,r)$ or there exists a quasi-simple module $M_{\mathcal{C}}$ such that
$(M_{\mathcal{C}} \rightarrow)=\mathcal{C} \cap \EKP(n,r).$
\end{theorem}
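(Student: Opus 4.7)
The key ingredients are three closure properties of $\EIP(n,r)$ established earlier in the excerpt (closed under quotients, under extensions, and under $\tau$), together with the standard combinatorics of a regular $\mathbb{Z}A_{\infty}$-component $\mathcal{C}$: its quasi-simples form a single $\tau$-orbit and every indecomposable of $\mathcal{C}$ can be written as $X[l]$ with $X$ its quasi-simple quasi-socle and $l$ its quasi-length, sitting in a canonical short exact sequence
\[
0 \longrightarrow X \longrightarrow X[l] \longrightarrow \tau^{-1}X[l-1] \longrightarrow 0
\]
all of whose terms belong to $\mathcal{C}$.

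Assume $\mathcal{C}\cap\EIP(n,r)\neq\emptyset$. Iterating the short exact sequence above and using closure under quotients, one sees that whenever $X[l]\in\EIP(n,r)$ the quasi-top $\tau^{-(l-1)}X$ is a quasi-simple belonging to $\EIP(n,r)$; hence the set $Q$ of quasi-simples of $\mathcal{C}$ lying in $\EIP(n,r)$ is non-empty. Closure under $\tau$ forces $Q$ to be a $\tau$-invariant subset of the single $\tau$-orbit of quasi-simples of $\mathcal{C}$, so either $Q$ is that whole orbit or $Q=\{\tau^{k}W : k\geq 0\}$ for a uniquely determined quasi-simple $W$. If $Q$ is everything, a straightforward induction on $l$ via the same short exact sequence and closure under extensions gives $\mathcal{C}\subseteq\EIP(n,r)$. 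Otherwise set $W_\mathcal{C}:=W$; the predecessor cone $(\to W_\mathcal{C})$ consists precisely of those $X[l]\in\mathcal{C}$ whose quasi-composition factors $X,\tau^{-1}X,\ldots,\tau^{-(l-1)}X$ all lie in $Q$. The inclusion $(\to W_\mathcal{C})\subseteq\mathcal{C}\cap\EIP(n,r)$ follows by the same induction, while the reverse inclusion follows from the quotient argument: given $X[l]\in\mathcal{C}\cap\EIP(n,r)$, the quasi-top (and therefore, by $\tau$-closure applied to it, every quasi-composition factor sitting $\tau$-above it, including the quasi-socle) is forced into $Q$, which pins $X[l]$ into $(\to W_\mathcal{C})$.

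The $\EKP$-assertion is obtained by dualising through the functor $\D$ described after Definition~\ref{defeip}: it interchanges $\EIP$ and $\EKP$, satisfies $\D\tau\cong\tau^{-1}\D$, dualises the short exact sequence above, and sends predecessor cones in $\D(\mathcal{C})$ to successor cones in $\mathcal{C}$. Applying the $\EIP$-result to $\D(\mathcal{C})$ and transporting back by $\D$ yields the $\EKP$-statement, with $M_\mathcal{C}$ corresponding to the dual of the $W$ produced for $\D(\mathcal{C})$. The main obstacle I expect to encounter is not the torsion-theoretic bookkeeping sketched above but the verification of the structural input on $\mathcal{C}$ in the non-hereditary setting $n\geq 3$: the existence of the canonical short exact sequence $0\to X\to X[l]\to\tau^{-1}X[l-1]\to 0$ with all terms inside $\mathcal{C}$ and the freeness of the $\tau$-action on the quasi-simples of $\mathcal{C}$. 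For wild hereditary algebras these facts go back to Ringel; in the present situation they should be available from the lifting machinery of \cite[2.5]{ri84} applied to the iterated one-point extension description of $B(n,r)$ announced in the introduction.
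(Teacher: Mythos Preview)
Your argument is correct and is essentially the standard one: use that $\EIP(n,r)$ is a torsion class (hence closed under quotients and extensions) which is additionally $\tau$-closed, and combine this with the wing combinatorics of a regular $\mathbb{Z}A_{\infty}$-component. The paper does not actually prove Theorem~\ref{ZA} here; it is quoted from \cite[2.9]{w12}, with the sentence preceding the statement indicating that the torsion pair $(\EIP(n,r),\mathcal{F})$ together with $\tau$-closure are the only ingredients needed. Your reconstruction matches that.

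One point deserves correction, though. Your final paragraph flags as the ``main obstacle'' the existence of the canonical short exact sequences
\[
0 \longrightarrow X \longrightarrow X[l] \longrightarrow \tau^{-1}X[l-1] \longrightarrow 0
\]
and the freeness of the $\tau$-action on quasi-simples, and you propose to recover these via the one-point extension lifting of \cite[2.5]{ri84}. This is a detour. Both facts are part of the general structure theory of \emph{any} regular component of tree class $A_{\infty}$ over an arbitrary Artin algebra: the hypothesis ``regular $\mathbb{Z}A_{\infty}$-component'' already excludes periodicity (so the $\tau$-orbit of quasi-simples is free), and the wing short exact sequences follow by the usual inductive splicing of almost split sequences along sectional paths, which uses only that the component contains no projectives or injectives (see e.g.\ Ringel's Tame Algebras monograph or \cite[X--XI]{ass07b}). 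No hereditary assumption and no recourse to the iterated one-point extension description of $B(n,r)$ is needed at this stage; that machinery enters the paper only later, to \emph{produce} concrete $\mathbb{Z}A_{\infty}$-components to which Theorem~\ref{ZA} can then be applied.
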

Here, for $M \in \modi B(n,r)$ indecomposable, we denote by $(M \rightarrow)$ and $(\rightarrow M)$ the sets consisting of $M$ and all successors and all predecessors of $M$ in $\Gamma(n,r)$, respectively. In case $\mathcal{C}$ contains both, modules with the equal images and the equal kernels property, we define
$\mathcal{W}(\mathcal{C})$ by the property $$\tau^{\mathcal{W}(\mathcal{C})+1}M_{\mathcal{C}}=W_{\mathcal{C}}$$ i.e. $\mathcal{W}(\mathcal{C})$ is the number of quasi-simple modules in $\mathcal{C}$ that neither satisfy the equal images nor the equal kernels property.\\
We furthermore denote by $R=k[X_1,\ldots,X_r]$ the polynomial ring in $r$ variables and by $I=(X_1,\ldots,X_r)$ the ideal generated by $X_1,\ldots,X_r$.
There is an equivalence of categories $\modi B(n,r) \cong \mathcal{C}_{[0,n-1]}$, where $\mathcal{C}_{[0,n-1]}$ denotes the category of $\mathbb{Z}$-graded $R$-modules with support contained in $\left\{0,\ldots,n-1 \right\}$. We denote by $[-]$ the shift to the right in the category of $\mathbb{Z}$-graded $R$-modules. Via this identification the $\mathbb{Z}$-graded $R$-module
$$M_{m,n}^{(r)}=(I^{m-n}/I^n)[n-m],$$
$m \geq n$, is an indecomposable object in $\EKP(n,r)$ with $W_{m,n}^{(r)}:=\D M_{m,n}^{(r)} \in \EIP(n,r)$ \cite[3.6]{w13}. In case $m<n$, we define $M_{m,n}^{(r)}:=M_{n,n}^{(r)}$. We call modules of the form $W_{m,n}^{(r)}$ {\bf generalized $W$-modules}, since for $n \leq p$ and $r=2$, these modules correspond to the $kE_2$-modules defined by Carlson, Friedlander and Suslin in \cite{cfs09} via generators and relations. The module ${M_{3,2}^{(3)}} \in \EKP(2,3)$, for example, can be depicted as follows:
\[ \begin{xy}
\xymatrix@R=2em@C=0.4em{
& & & \stackrel{x_1}{\bullet} \ar@{->}[llld] \ar@{-->}[ld] \ar@{~>}[rrrd] &&  \stackrel{x_2}{\bullet} \ar@{->}[llld] \ar@{-->}[ld] \ar@{~>}[rrrd]&&  \stackrel{x_3}{\bullet} \ar@{~>}[rrrd] \ar@{-->}[rd] \ar@{->}[ld]\\
{\bullet} &&  {\bullet}  && \bullet  && \bullet  && \bullet  && \bullet}
\end{xy} \]
The dots represent the canonical basis elements given by the monomials in degree one and two and $\rightarrow, \ \dashrightarrow$ and $\rightsquigarrow$ denote the action of $\gamma_1^{(0)}, \gamma_2^{(0)}$ and $\gamma_3^{(0)}$, respectively.\\

The algebra $B(2,r)$ is the path algebra $\mathcal{K}_r$ of the $r$-Kronecker quiver. Whenever $r>2$, $B(2,r)$ is wild and due to a result by Ringel \cite{ri78} all regular components are of type $\mathbb{Z}A_{\infty}$. In \cite[\S 3]{w12}, we have shown that the module $W_{m,2}^{(r)}$ is quasi-simple in a $\mathbb{Z}A_{\infty}$-component $\mathcal{C}_m$ of $\Gamma(2,r)$ with $W_{\mathcal{C}_m}=W_{m,2}^{(r)}$ and $\mathcal{W}(\mathcal{C}_m)=0$. Thus the module $W_{m,2}^{(r)}$ is in the rightmost position in the equal images cone of $\mathcal{C}_m \subseteq \Gamma(2,r)$, i.e. $\mathcal{C}_m$ can be visualized as in Fig. 1 with $W=W_{m,2}^{(r)}$. The dual statement holds for modules of the form $M_{m,2}^{(r)}$.

\section{One-point extensions}
We will now provide the necessary theoretical framework and notation for the theory of one-point extensions. For a general introduction, the reader is referred to \cite{ri84} or \cite[XV.1]{ass07b}.
\begin{defi}[Ringel \cite{ri84}]\label{ext}
Let $A$ be an algebra, $M$ in $\modi A$. The algebra
$$A[M]={\begin{pmatrix} A & M \\ 0 & k \end{pmatrix}}$$
with usual matrix addition and multiplication is referred to as the {\bf one-point extension} of $A$ by $M$.
\end{defi}
If $A=kQ_A/I$ is a basic algebra, we obtain the quiver $Q_{A[M]}$ of $A[M]$ by adding a source vertex together with some arrows to $Q_A$.
A module over $A[M]$ is of the form ${\begin{pmatrix} N \\ V \end{pmatrix}}_{\varphi},$ where $N \in \modi A$, $V \in \modi k$ and $\varphi \in \Hom_k (V, \Hom_A(M,N))$. The $A[M]$-module structure is then given via
$${\begin{pmatrix} a & m \\ 0 & \lambda \end{pmatrix}} {\begin{pmatrix} n \\ v \end{pmatrix}} = {\begin{pmatrix} a.n + \varphi(v)(m) \\ \lambda v \end{pmatrix}}.$$
Given $\tilde{X}={\begin{pmatrix} X \\ V \end{pmatrix}}_{\varphi},\; \tilde{Y}={\begin{pmatrix} Y \\ W \end{pmatrix}}_{\psi} \in \modi A[M]$, a morphism $\tilde{X} \rightarrow \tilde{Y}$ in $\modi A[M]$ corresponds to a pair $(f_0,f_1)$, where $f_0 \in \Hom_A(X,Y), f_1 \in \Hom_k(V,W)$ and $\Hom_A(M,f_0) \circ \varphi = \psi \circ f_1$.
Since $A$ is a factor algebra of $A[M]$, there is a full exact embedding $\iota_A: \modi A \rightarrow \modi A[M]$, sending $N \in \modi A$ to the $A[M]$-module ${\begin{pmatrix} N \\ 0 \end{pmatrix}}_{0}$. \\
There is a simple injective module $\tilde{S}={\begin{pmatrix} 0 \\ k \end{pmatrix}}_0 \in \modi A[M]$. The indecomposable projective $A[M]$-modules are exactly the images of the projective indecomposables of $A$ under $\iota_A$ together with the module
$$P(\tilde{S})={\begin{pmatrix} M \\ k \end{pmatrix}}_{\lambda \mapsto \lambda \id_M}.$$

The following lemma due to Ringel gives information on how almost split sequences in $\modi A$ ``lift'' to $\modi A[M]$ \cite[2.5]{ri84}.
\begin{lemma}\label{lift}
Let $A$ be an algebra, $M$ an $A$-module. Let furthermore
$$0 \rightarrow \tau N  \stackrel{f}{\rightarrow} E  \stackrel{g}{\rightarrow} N \rightarrow 0$$ be an Auslander-Reiten sequence in $\modi A$. Then
$$0 \rightarrow {\begin{pmatrix} \tau N \\ \Hom_A(M,\tau N)\end{pmatrix}}_{\id} \stackrel{{\begin{pmatrix} f \\ \id \end{pmatrix}}}{\rightarrow} {\begin{pmatrix} E \\ \Hom_A(M,\tau N)\end{pmatrix}}_{f^*} \stackrel{{\begin{pmatrix} g \\ 0 \end{pmatrix}}}{\rightarrow} {\begin{pmatrix} N \\ 0 \end{pmatrix}}_0 \rightarrow 0$$
is an Auslander-Reiten sequence in $\modi A[M]$.
\end{lemma}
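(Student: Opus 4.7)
The plan is to verify directly that the displayed sequence satisfies the defining properties of an Auslander-Reiten sequence in $\modi A[M]$: namely that it is a non-split short exact sequence with indecomposable outer terms, such that every morphism into the right-hand term that is not a split epimorphism factors through the middle map. Throughout write $V := \Hom_A(M,\tau N)$ and note that $f^* = \Hom_A(M,f)$. The bookkeeping is routine: the compatibility condition $\Hom_A(M,f)\circ\id = f^*\circ\id$ is tautological and $\Hom_A(M,g)\circ f^* = \Hom_A(M,gf) = 0$, so the displayed arrows really are morphisms of $A[M]$-modules. Exactness reduces to exactness of the $A$-component (the original Auslander-Reiten sequence) and of $0 \to V \xrightarrow{\id} V \to 0$. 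The right-hand term $\iota_A(N)$ is indecomposable because $\iota_A$ is a full embedding and $N$ is indecomposable; for the left-hand term, an endomorphism $(h_0,h_1)$ of $\begin{pmatrix}\tau N \\ V\end{pmatrix}_{\id}$ is forced to satisfy $h_1 = \Hom_A(M,h_0)$, so its endomorphism ring embeds into the local ring $\End_A(\tau N)$ and is itself local. Any section of $\tilde g = (g,0)$ would project to a section of $g$, contradicting the fact that the original sequence is almost split; hence the lifted sequence is non-split as well.

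The main step is the almost split property. Let $\tilde h = (h_0,0)\colon \tilde X = \begin{pmatrix} X \\ W \end{pmatrix}_{\varphi} \to \iota_A(N)$ be a morphism that is not a split epimorphism. Any section of $h_0$ in $\modi A$ extends (with the zero map on the $k$-component) to a section of $\tilde h$, so $h_0$ cannot be split epi, and therefore factors as $h_0 = g \circ k_0$ by the Auslander-Reiten property of the original sequence. To promote $k_0$ to a morphism $\tilde k = (k_0,k_1)\colon \tilde X \to \begin{pmatrix} E \\ V \end{pmatrix}_{f^*}$ one needs a linear map $k_1\colon W \to V$ satisfying $\Hom_A(M,k_0)\circ\varphi = \Hom_A(M,f)\circ k_1$. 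Applying the left exact functor $\Hom_A(M,-)$ to the Auslander-Reiten sequence yields
\[
0 \to V \xrightarrow{\Hom_A(M,f)} \Hom_A(M,E) \xrightarrow{\Hom_A(M,g)} \Hom_A(M,N),
\]
and from $\Hom_A(M,g)\circ\Hom_A(M,k_0)\circ\varphi = \Hom_A(M,h_0)\circ\varphi = 0$ one reads off that $\Hom_A(M,k_0)\circ\varphi$ lands in $\im\Hom_A(M,f)$. Since $\Hom_A(M,f)$ is injective, $k_1$ is uniquely determined and $\tilde h = \tilde g \circ \tilde k$.

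The principal obstacle is precisely this last factorization: it hinges on the interplay between the compatibility datum $\varphi$ of the domain $\tilde X$ and the left exactness of $\Hom_A(M,-)$ applied to the Auslander-Reiten sequence. Once one recognizes that the vanishing $\Hom_A(M,h_0)\circ\varphi = 0$ forces $\Hom_A(M,k_0)\circ\varphi$ into $\im\Hom_A(M,f)$, the construction of $k_1$ is automatic and the remaining verifications are formal.
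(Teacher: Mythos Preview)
Your argument is correct. Note, however, that the paper does not prove this lemma at all: it is quoted verbatim from Ringel \cite[2.5]{ri84} and simply cited. So there is no ``paper's own proof'' to compare against; you have supplied the standard direct verification that Ringel's reference would contain.

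One small point of phrasing: you argue that the endomorphism ring of the left-hand term \emph{embeds} into the local ring $\End_A(\tau N)$ and is therefore itself local. A subring of a local ring need not be local in general, so this inference as stated is not quite valid. What actually happens is stronger: for every $h_0\in\End_A(\tau N)$ the pair $(h_0,\Hom_A(M,h_0))$ is a legitimate endomorphism of $\begin{pmatrix}\tau N\\V\end{pmatrix}_{\id}$, so the projection $(h_0,h_1)\mapsto h_0$ is a ring \emph{isomorphism} onto $\End_A(\tau N)$, and locality follows immediately. Everything else---exactness, non-splitness, and the key factorization step using left exactness of $\Hom_A(M,-)$---is clean and complete.
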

\section{Beilinson algebras as iterated one-point extensions}

The simple module $S(0) \in \modi B(n,r)$ is injective. In view of \cite[XV.1]{ass07b}, we hence obtain 
$$B(n,r) \cong  (1-e_0)B(n,r)(1-e_0)[\rad P(0)].$$

Since $B(n-1,r)$ is isomorphic to the algebra $(1-e_0)B(n,r)(1-e_0)$, we can thus regard
$B(n,r)$ as a one-point extension of $B(n-1,r)$ and consider $\rad P(0)$ an object in $\modi B(n-1,r)$. By \cite[3.23]{w13}, we have isomorphisms $M_{n,n}^{(r)} \cong P(0)$ in $\modi B(n,r)$ and $\rad P(0) \cong M_{n,n-1}^{(r)} $ in $\modi B(n-1,r)$. Inductively, we obtain
$$
B(n,r) \cong B(n-1,r) [M_{n,n-1}^{(r)}] \cong \mathcal{K}_r[M_{3,2}^{(r)}] \cdots [M_{n,n-1}^{(r)}].
$$

\bigskip

For $n=3$, $r=2$, we can visualize this as follows: Extending the path algebra $\mathcal{K}_2$ of the quiver
\[ \begin{xy} \xymatrix{
1 \ar@/^1pc/[r]^{\gamma_1} \ar@{-->}@/_1pc/[r]_{\gamma_2}  & 2} \end{xy} \] by the module $M_{3,2}^{(2)}$
\[ \begin{xy}
\xymatrix@R=0.8em@C=0.8em{
& \bullet^{x_1} \ar[ld] \ar@{-->}[rd] && \bullet^{x_2} \ar[ld] \ar@{-->}[rd]\\
\bullet && \bullet && \bullet}
\end{xy} \]
yields the path algebra of \[ \begin{xy}\xymatrix{
0 \ar@/^1pc/[r]^{x_1} \ar@{-->}@/_1pc/[r]_ {x_2} & 1 \ar@/^1pc/[r]^{\gamma_1} \ar@{-->}@/_1pc/[r]_{\gamma_2}  & 2} \end{xy} \] with relations $\gamma_2.x_1=\gamma_1.x_2$, which is easily seen to be isomorphic to $B(3,2)$.\\

From now on, we will identify the algebras $B(n,r)$ and $B(n-1,r)[M_{n,n-1}^{(r)}]$. Note that when writing $\tilde{M} \in \modi B(n,r)$ in the form $\tilde{M}={\begin{pmatrix} M \\ V \end{pmatrix}}_\varphi \in \modi B(n-1,r) [M_{n,n-1}^{(r)}]$, the dimension vector $\underline{\dim} \tilde{M}$ coincides with the vector $(\dim_k V, \underline{\dim} M)$.\\
We want to study the Auslander-Reiten quiver $\Gamma(n,r)$ by making use of the information on $\Gamma(2,r)$ presented in Section 2. An application of Lemma \ref{lift} yields \cite[5.6]{w13}:

\begin{prop}\label{wmcomps}
Let $m \geq n\geq 3$. We have
\begin{enumerate}
\item[(i)] $\iota_{B(n-1,r)} \tau^{-1}_{B(n-1,r)} M_{m,n-1}^{(r)} \cong \tau^{-1}_{B(n,r)}M_{m,n}^{(r)}$,
\item[(ii)]  $\iota_{B(n-1,r)} \tau^{-1}_{B(n-1,r)} W_{m,n-1}^{(r)} \cong \tau^{-1}_{B(n,r)}W_{m+1,n}^{(r)}$.
\end{enumerate}
\end{prop}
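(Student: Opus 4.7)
My plan is to apply Ringel's lifting Lemma \ref{lift} to the identification $B(n,r) = B(n-1,r)[M_{n,n-1}^{(r)}]$. In both parts, I will start with the AR-sequence $0 \to X \to E \to N \to 0$ in $\modi B(n-1,r)$ ending at $N = \tau^{-1}_{B(n-1,r)} X$ --- taking $X = M_{m,n-1}^{(r)}$ for (i) and $X = W_{m,n-1}^{(r)}$ for (ii). Lemma \ref{lift} then lifts this to an AR-sequence in $\modi B(n,r)$ whose terminal term is $\iota_{B(n-1,r)} N$ (exactly what the proposition requires on the right-hand side) and whose initial term is $L_X := \begin{pmatrix} X \\ \Hom_{B(n-1,r)}(M_{n,n-1}^{(r)}, X) \end{pmatrix}_{\id}$. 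Since the terminal term of an AR-sequence equals $\tau^{-1}$ of the initial term, the proposition will follow once I identify $L_X$ with $M_{m,n}^{(r)}$ for (i) and with $W_{m+1,n}^{(r)}$ for (ii).

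For (i), I would use the graded-$R$-module description of the relevant modules. Two routine checks will show that $(1-e_0)M_{m,n}^{(r)}$, after the vertex shift, agrees with $M_{m,n-1}^{(r)}$ component-by-component, and that the structure map of the one-point extension matches the natural identification. The heart of the argument is the $\Hom$-space computation: a graded map $M_{n,n-1}^{(r)} \to M_{m,n-1}^{(r)}$ is determined by the images $y_1,\ldots,y_r$ of the generators $x_1,\ldots,x_r$, subject to the commutativity relations $x_j y_i = x_i y_j$. Since $R$ is a UFD, these relations force $y_i = x_i f$ for a unique polynomial $f$ of degree $m-n$, giving a natural bijection between this $\Hom$-space and $R_{m-n}$ --- which is precisely the $e_0$-component of $M_{m,n}^{(r)}$. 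This establishes $L_X \cong M_{m,n}^{(r)}$.

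For (ii), the argument will be essentially dual. Writing $W = \D M$ and using that $\D$ reverses the vertex labeling, I expect to obtain $(1-e_0) W_{m+1,n}^{(r)} \cong W_{m,n-1}^{(r)}$ and $e_0 W_{m+1,n}^{(r)} \cong R_m^*$. For the $\Hom$-space, I plan to invoke the standard bijection between $\Hom(Y, \D X)$ and degree-zero graded bilinear pairings $Y \otimes X \to k$ compatible with the $R$-action: any such pairing on $M_{n,n-1}^{(r)} \otimes M_{m,n-1}^{(r)}$ factors uniquely through the multiplication $R_{d+1} \otimes R_{m-d-1} \to R_m$, producing a unique element of $R_m^*$. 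The index shift $m \mapsto m+1$ arises naturally here: the relevant degree of $R$ is $m$, which appears at vertex $0$ of $W_{m+1,n}^{(r)}$ rather than $W_{m,n}^{(r)}$.

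The main obstacle I foresee is the book-keeping. The two algebras use different vertex labelings with a shift, and $\D$ reverses them, so matching graded pieces on either side of each $\Hom$-identification requires careful tracking of degrees. The algebraic content --- unique factorization in $R$ for (i), and the bilinear-pairing interpretation of maps into $\D X$ for (ii) --- is standard once the set-up is correctly aligned.
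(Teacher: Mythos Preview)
Your approach is correct and matches the paper's: the paper states the proposition as an application of Lemma \ref{lift} and defers the computation to \cite[5.6]{w13}, and you carry out exactly that computation. The identifications $(1-e_0)M_{m,n}^{(r)}\cong M_{m,n-1}^{(r)}$ and $(1-e_0)W_{m+1,n}^{(r)}\cong W_{m,n-1}^{(r)}$ follow directly from the graded description $M_{m,n}^{(r)}=(I^{m-n}/I^m)[n-m]$, and your two $\Hom$-computations are the natural ones. A couple of small points worth making explicit in a final write-up: for the UFD step in (i) one uses $r\ge 2$ (to pick $j\ne i$ so that $x_i\mid m_i$), and in both parts one should note that $M_{m,n-1}^{(r)}$ and $W_{m,n-1}^{(r)}$ are non-injective in $\modi B(n-1,r)$ so that the AR-sequences used as input to Lemma \ref{lift} actually exist; finally, the identification of the structure map $\varphi$ with the canonical isomorphism (so that $L_X$ really equals the claimed module and not just something with the same underlying pair $(X,V)$) deserves one sentence, though it is routine.
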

Hence the Auslander-Reiten sequences in $\modi B(n-1,r)$ starting in $M_{m,n-1}^{(r)}$ and $W_{m,n-1}^{(r)}$ lift to Auslander-Reiten sequences in $\modi B(n,r)$ that start in $M_{m,n}^{(r)}$ and $W_{m+1,n}^{(r)}$, respectively.

\section{Occurrence of generalized $W$-modules in $\Gamma(n,r)$}
We now show that generalized $W$-modules determine $\mathbb{Z}A_{\infty}$-components in $\Gamma(n,r)$, $n \geq 3$, $r \geq 2$, that entirely consist of modules with the constant Jordan type property.\\

Let us consider the case $r=2$. On the level of the Auslander-Reiten quiver $\Gamma (2,2)$ of the tame algebra $\mathcal{K}_2$, we do not have any $\mathbb{Z}A_{\infty}$-components to start out with and due to \cite[4.2.2]{f11}, the $W$-modules correspond to the preinjective $\mathcal{K}_2$-modules. With the use of tilting theory one can show that all regular components of $\Gamma(3,2)$ are of type $\mathbb{Z}A_{\infty}$ as has been communicated to me by Otto Kerner \cite{ker13}: There exists a preprojective tilting module $T$ over the path algebra of the extended Kronecker quiver

\[ \begin{xy}\xymatrix{
0 \ar@{->}[r] & 1 \ar@/^1pc/[r] \ar@{->}@/_1pc/[r] & 2} \end{xy} \]

such that $\End(T)$ is isomorphic to $B(3,2)$. The regular components of $\Gamma(\End(T))$ are of type $\mathbb{Z}A_{\infty}$, while there is a preprojective and preinjective component consisting of the $\tau$- and $\tau^{-1}$-shifts of all projective indecomposables and injective indecomposables, respectively.

\begin{prop}\label{dreizwei}
For $m > 3$, the module $W_{m,3}^{(2)}$ is quasi-simple in a $\mathbb{Z}A_{\infty}$-component $\mathcal{C}_m$ of $\Gamma(3,2)$ with $\mathcal{W}(\mathcal{C}_m)=1$. Dually, $M_{m,3}^{(2)}$ is quasi-simple in a $\mathbb{Z}A_{\infty}$-component of $\Gamma(3,2)$.
\end{prop}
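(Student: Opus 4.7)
The plan is to combine Kerner's observation (recalled at the start of Section~4) that every regular component of $\Gamma(3,2)$ is of type $\mathbb{Z}A_{\infty}$ with the lifting machinery of Section~\ref{two}. By the discussion at the end of Section~\ref{two}, $W_{m-1,2}^{(2)}$ is preinjective in $\mathcal{K}_{2}$ and sits in the AR-sequence
\[
0 \rightarrow W_{m-1,2}^{(2)} \rightarrow \bigl(W_{m-2,2}^{(2)}\bigr)^{2} \rightarrow \tau^{-1}_{\mathcal{K}_{2}}W_{m-1,2}^{(2)} \rightarrow 0;
\]
applying Lemma~\ref{lift} and identifying the lifted starting term via Proposition~\ref{wmcomps}(ii) produces an AR-sequence
\[
0 \rightarrow W_{m,3}^{(2)} \rightarrow \tilde{E} \rightarrow \iota\,\tau^{-1}_{\mathcal{K}_{2}}W_{m-1,2}^{(2)} \rightarrow 0
\]
in $\modi B(3,2)$ with $\tilde{E}$ the explicit middle term of Lemma~\ref{lift}. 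Inspection of dimension vectors (against those of the three projective/injective indecomposables and their $\tau^{\mp k}$-orbits constituting the preprojective/preinjective components in Kerner's description) shows $W_{m,3}^{(2)}$ is regular, hence by Kerner lies in a component $\mathcal{C}_{m}$ of type $\mathbb{Z}A_{\infty}$.

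The step I expect to be the main obstacle is showing that the lifted middle term $\tilde{E}$ is indecomposable; by the mesh structure of $\mathbb{Z}A_{\infty}$ this is equivalent to $W_{m,3}^{(2)}$ being quasi-simple in $\mathcal{C}_{m}$. The subtlety is that the Kronecker middle term $\bigl(W_{m-2,2}^{(2)}\bigr)^{2}$ is a direct sum of two copies, so indecomposability of $\tilde{E}$ has to come entirely from the gluing data $f^{*}$ derived from $\Hom_{\mathcal{K}_{2}}\bigl(M_{3,2}^{(2)},W_{m-1,2}^{(2)}\bigr)$. I would attack this by proving $\End_{B(3,2)}(\tilde{E})$ is local: a morphism $(g_{0},g_{1})\colon\tilde{E}\rightarrow\tilde{E}$ must satisfy $\Hom(M_{3,2}^{(2)},g_{0})\circ f^{*}=f^{*}\circ g_{1}$, and the identity form of $f^{*}$ prescribed by Lemma~\ref{lift} restricts the $2\times 2$-matrix $g_{0}$ over the local ring $\End_{\mathcal{K}_{2}}(W_{m-2,2}^{(2)})$ strongly enough that every idempotent of $\End(\tilde{E})$ modulo its radical reduces to a scalar.

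Once $W_{m,3}^{(2)}$ is known to be quasi-simple in $\mathcal{C}_{m}$, the structural statement $\mathcal{W}(\mathcal{C}_{m})=1$ is obtained via Theorem~\ref{ZA} and Remark~\ref{equalzero}. Since $W_{m,3}^{(2)}\in\EIP(3,2)$ while the nonzero module $\iota\,\tau^{-1}_{\mathcal{K}_{2}}W_{m-1,2}^{(2)}\in\mathcal{C}_{m}$ has vanishing $e_{0}$-component, Remark~\ref{equalzero}(i) places it outside $\EIP(3,2)$, and Theorem~\ref{ZA} forces $W_{\mathcal{C}_{m}}=W_{m,3}^{(2)}$. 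Moreover, its restriction to vertices $\{1,2\}$ is a $\mathcal{K}_{2}$-module in $\EIP(2,2)$, whose generic $\alpha$-action from $V_{1}$ to $V_{2}$ has a non-trivial kernel, so this $\tau^{-1}$-translate fails $\EKP(3,2)$ as well, giving $\mathcal{W}(\mathcal{C}_{m})\geq 1$. To obtain equality I would verify $\tau^{-2}_{B(3,2)}W_{m,3}^{(2)}\in\EKP(3,2)$ by computing the next AR-translate along the $\tau^{-1}$-orbit in $\mathcal{C}_{m}$---most cleanly through the tilting equivalence with the extended Kronecker algebra, under which $\tau^{-k}W_{m,3}^{(2)}$ corresponds to the $\tau^{-k}$-shift of a regular module in a $\mathbb{Z}A_{\infty}$-component of a wild hereditary algebra and can be tracked by the associated Coxeter matrix. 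The dual statement for $M_{m,3}^{(2)}$ is then immediate via the self-duality $\D$ of $\modi B(3,2)$, which satisfies $\D\tau\cong\tau^{-1}\D$ and maps $W_{m,3}^{(2)}$ to $M_{m,3}^{(2)}$: it sends $\mathcal{C}_{m}$ to a $\mathbb{Z}A_{\infty}$-component of $\Gamma(3,2)$ in which $M_{m,3}^{(2)}$ appears as a quasi-simple module.
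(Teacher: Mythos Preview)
Your overall architecture matches the paper's: lift the Kronecker AR-sequence via Lemma~\ref{lift} and Proposition~\ref{wmcomps}(ii), identify $\tau^{-1}_{B(3,2)}W_{m,3}^{(2)}\cong\iota_{B(2,2)}W_{m-3,2}^{(2)}$, invoke Kerner's observation for the tree class, and finish via $\D$. But at the two steps you single out as hard the paper takes much shorter routes, and one of your proposed detours contains a genuine gap.

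\textbf{Quasi-simplicity.} The paper bypasses the indecomposability of $\tilde{E}$ entirely. Since $e_{0}.\iota_{B(2,2)}W_{m-3,2}^{(2)}=0$, Remark~\ref{equalzero}(i) forces every $\EIP(3,2)$-submodule of $\tau^{-1}_{B(3,2)}W_{m,3}^{(2)}$ to vanish, so this module is torsion-free for the pair $(\EIP(3,2),\mathcal{F})$ of \cite[2.2.3]{w12}. As $W_{m,3}^{(2)}$ lies in the torsion class, $\Hom_{B(3,2)}\bigl(W_{m,3}^{(2)},\tau^{-1}_{B(3,2)}W_{m,3}^{(2)}\bigr)=0$, and in a $\mathbb{Z}A_{\infty}$-component this already forces quasi-simplicity: a module of quasi-length $>1$ admits a nonzero composite of irreducible maps into its $\tau^{-1}$-translate. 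Your endomorphism-ring computation is therefore unnecessary. The same torsion-theoretic observation also replaces your dimension-vector comparison for regularity: preinjectives lie in $\EIP(3,2)$, which is $\tau$-stable, so $\tau^{-1}W_{m,3}^{(2)}\notin\EIP(3,2)$ excludes the preinjective component, while $W_{m,3}^{(2)}\in\EIP(3,2)$ excludes the preprojective one.

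\textbf{The bound $\mathcal{W}(\mathcal{C}_m)\leq 1$.} Here your Coxeter-matrix proposal is a genuine gap: the Coxeter transformation records only dimension vectors, whereas membership in $\EKP(3,2)$ is a rank condition on the operators $\alpha_{M}$ and is not determined by $\underline{\dim}\,M$. Even transporting the module through the tilting equivalence, you would still have to verify $\EKP$ by hand on the $B(3,2)$-side, which you do not do. The paper instead applies the dual of \cite[2.2.6]{w12}, to the effect that $e_{0}.X=0$ forces $\tau^{-1}X\in\EKP(n,r)$; taking $X=\tau^{-1}_{B(3,2)}W_{m,3}^{(2)}$ gives $\tau^{-2}_{B(3,2)}W_{m,3}^{(2)}\in\EKP(3,2)$ in one line.
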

\begin{proof}
Consider $W_{m-1,2}^{(2)} \in \modi \mathcal{K}_2$. Due to Proposition \ref{wmcomps} $(ii)$, we have an isomorphism $\tau_{B(3,2)}^{-1}W_{m,3}^{(2)} \cong \iota_{B(2,2)}\tau^{-1}_{\mathcal{K}_2} W_{m-1,2}^{(2)}$. Note that for $k \geq 1$, the module $W^{(2)}_{k,2}$ is the preinjective $\mathcal{K}_2$-module with dimension vector $(k-1,k)$ \cite[4.2.2]{f11}. It is well-known that there is an Auslander-Reiten sequence

$$ 0 \rightarrow W_{m-1,2}^{(2)} \rightarrow  W_{m-2,2}^{(2)} \oplus W_{m-2,2}^{(2)} \rightarrow  W_{m-3,2}^{(2)} \rightarrow 0 $$
in $\modi \mathcal{K}_2$. We hence obtain
$$\tau_{B(3,2)}^{-1} W_{m,3}^{(2)} \cong \iota_{B(2,2)} W_{m-3,2}^{(2)}.$$
Remark \ref{equalzero} yields that $\iota_{B(2,2)} W_{m-3,2}^{(2)} \notin \EIP(3,2)$ since $e_0.\iota_{B(2,2)} W_{m-3,2}^{(2)} = 0$. Hence we have $\tau_{B(3,2)}^{-1} W_{m,3}^{(2)} \notin \EIP(3,2)$ and in view of \cite[2.2.3]{w12}, this implies that $W_{m,3}^{(2)}$ is neither preinjective nor preprojective. Thus $W_{m,3}^{(2)}$ is a regular module with the equal images property and therefore contained in a $\mathbb{Z}A_{\infty}$-component $\mathcal{C}_m$ of $\Gamma(3,2)$.
Due to the fact that $e_0.\tau_{B(3,2)}^{-1}W_{m,3}^{(2)}=0$, the module $\tau_{B(3,2)}^{-1}W_{m,3}^{(2)}$ can not contain a non-trivial module with the equal images property and is hence torsion-free with respect to the torsion pair $(\EIP(3,2), \mathcal{F})$, where $\EKP(3,2) \subseteq \mathcal{F}$, cf. \cite[2.2.3]{w12}. This yields that there is no non-trivial map $W_{m,3}^{(2)} \rightarrow \tau_{B(3,2)}^{-1} W_{m,3}^{(2)}$ and hence $W_{m,3}^{(2)}$ is quasi-simple in $\mathcal{C}_m$. Since $W_{m-3,2}^{(2)} \in \EIP(2,2)$, we obtain $W_{m-3,2}^{(2)} \notin \EKP(2,2)$ by \cite[2.2.3]{w12}, which yields that $\iota_{B(2,2)} W_{m-3,2}^{(2)} \notin \EKP(3,2)$ by \cite[3.32]{w13}. Furthermore, due to the fact that $e_0.\tau_{B(3,2)}^{-1} W_{m,3}^{(2)}=0$, the dual of \cite[2.2.6]{w12} yields that $\tau_{B(3,2)}^{-2} W_{m,3}^{(2)} \in \EKP(3,2)$ and hence $\mathcal{W}(\mathcal{C}_m)=1$. The dual statement holds in view of the duality $\D$ on $\modi B(n,r)$, which is compatible with $\tau$.
\end{proof}
In the proof, we made use of the fact that for $m>3$, we have $\tau_{B(3,2)}^{-1} W_{m,3}^{(2)} \cong \iota_{B(2,2)} W_{m-3,2}^{(2)}$.
Proposition \ref{wmcomps} $(ii)$ now inductively yields for $m>n \geq 3$
$$ \tau_{B(n,2)}^{-1} W_{m,n}^{(2)} \cong \iota_{B(n-1,2)}\cdots \iota_{B(2,2)}W_{m-n,2}^{(2)} $$
and dually
\begin{equation}\label{zwei} \tau_{B(n,2)}M_{m,n}^{(2)} \cong \D \iota_{B(n-1,2)}\cdots \iota_{B(2,2)}W_{m-n,2}^{(2)}. \end{equation}
In particular, we have 
\begin{equation}\label{simple}
\tau_{B(n,2)}M_{n+1,n}^{(2)} \cong S(1).
\end{equation}

An alternative proof for the fact that generalized $W$-modules determine $\mathbb{Z}A_{\infty}$-components in $\Gamma(n,2)$, $n \geq 3$, can be found in \cite[5.7]{w13}.\\
Suppose now that for general $r \geq 2$, we have a $\mathbb{Z}A_{\infty}$-component $\mathcal{C}$ in $\Gamma(n-1,r)$ with $\EKP(n-1,r) \cap \mathcal{C} \neq \emptyset$ which is not completely contained in the category $\EKP(n-1,r)$. It is not known whether, in general, there exist $\mathbb{Z}A_{\infty}$-components that entirely consist of modules with the equal kernels property. At the level of the Kronecker quiver, however, this can not happen \cite[3.3]{w12}. By Theorem \ref{ZA}, the component $\mathcal{C}$ contains an equal kernels cone $(M_{\mathcal{C}} \rightarrow)=\mathcal{C} \cap \EKP(n,r)$ consisting of a distinct quasi-simple module $M=M_{\mathcal{C}} \in \EKP(n,r)$ and all its successors:
\[\begin{xy}
\xymatrix@R=0.7em@C=0.7em{
\ddots  && \vdots && \vdots &&\vdots  && \vdots && \vdots &&\vdots &&\iddots\\
\cdots &   \circ  \ar[rd]  &&  \circ  \ar[rd]   && \circ   \ar[rd]   &&  \circ   \ar[rd] &&  \circ   \ar[rd]  &&  \circ   \ar[rd] && \circ   \ar[rd]  & \cdots \\
\circ  \ar[rd] \ar[ru] && \circ  \ar[rd] \ar[ru] \ar@{-->}[ll] && \circ  \ar[rd] \ar[ru] \ar@{-->}[ll] && \circ  \ar[rd] \ar[ru] \ar@{-->}[ll] && \circ  \ar[rd] \ar[ru] \ar@{-->}[ll] && \circ  \ar[rd] \ar[ru] \ar@{-->}[ll] && \circ  \ar[rd] \ar[ru]  \ar@{-->}[ll] && \bullet  \ar@{-->}[ll] \\
  \cdots &  \circ  \ar[ru]  \ar[rd]  &&  \circ  \ar[ru]  \ar[rd] \ar@{-->}[ll]  &&  \circ  \ar[ru]  \ar[rd] \ar@{-->}[ll]  &&  \circ  \ar[ru]  \ar[rd] \ar@{-->}[ll] &&  \circ  \ar[ru]  \ar[rd] \ar@{-->}[ll]  &&  \circ  \ar[ru]  \ar[rd] \ar@{-->}[ll]   && \bullet    \ar[rd] \ar@{-->}[ll] \ar[ru] & \cdots \\
\circ  \ar[ru]  \ar[rd] &&   \circ  \ar[ru]  \ar[rd] \ar@{-->}[ll] &&  \circ   \ar[ru]  \ar[rd] \ar@{-->}[ll]  && \circ   \ar[ru]  \ar[rd] \ar@{-->}[ll] &&  \circ  \ar[ru]  \ar[rd] \ar@{-->}[ll] &&  \circ   \ar[ru]  \ar[rd] \ar@{-->}[ll]&&  \bullet  \ar[rd] \ar@{-->}[ll] \ar[ru] &&  \bullet  \ar@{-->}[ll] \\
 \cdots & \circ  \ar[ru]  \ar[rd]  &&   \circ  \ar[ru]  \ar[rd] \ar@{-->}[ll]  &&  \circ  \ar[ru]  \ar[rd] \ar@{-->}[ll]  &&  \circ  \ar[ru]  \ar[rd] \ar@{-->}[ll] &&  \circ  \ar[ru]  \ar[rd] \ar@{-->}[ll]   &&  \bullet  \ar[ru]  \ar[rd] \ar@{-->}[ll] && \bullet  \ar[ru]  \ar[rd] \ar@{-->}[ll] &  \cdots \\
\circ  \ar[rd]  \ar[ru] && \circ  \ar[rd]  \ar[ru] \ar@{-->}[ll] &&  \circ  \ar[rd]  \ar[ru] \ar@{-->}[ll] && \circ  \ar[rd]  \ar[ru] \ar@{-->}[ll] && \circ  \ar[rd]  \ar[ru] \ar@{-->}[ll] &&  \bullet \ar[rd]  \ar[ru] \ar@{-->}[ll] && \bullet \ar[ru] \ar[rd] \ar@{-->}[ll] && \bullet  \ar@{-->}[ll] \\
 \cdots & \circ  \ar[ru]  \ar[rd]  &&  \circ  \ar[ru]  \ar[rd] \ar@{-->}[ll]  &&   \circ  \ar[ru]  \ar[rd] \ar@{-->}[ll]  &&  \circ  \ar[ru]  \ar[rd] \ar@{-->}[ll] && \bullet \ar[ru]  \ar[rd] \ar@{-->}[ll]   && \bullet  \ar[ru]  \ar[rd] \ar@{-->}[ll] && \bullet \ar[rd] \ar[ru]  \ar@{-->}[ll] &  \cdots \\
\circ   \ar[ru] &&  \circ   \ar[ru] \ar@{-->}[ll] &&  \circ   \ar[ru] \ar@{-->}[ll] &&  \circ  \ar[ru] \ar@{-->}[ll] &&  M \ar[ru] \ar@{-->}[ll]&&   \bullet \ar[ru] \ar@{-->}[ll] && \bullet \ar[ru] \ar@{-->}[ll]&& \bullet \ar@{-->}[ll]}
\end{xy} \]
\[\mathcal{C} \hspace{9 cm} \text{ Fig. 2} \] 
\begin{prop}\label{liftcone}
Let $\mathcal{C}$ be a component of $\Gamma(n-1,r)$ as above and let
\begin{equation}\label{n-1}
0 \rightarrow \tau N \rightarrow E \rightarrow N \rightarrow 0
\end{equation}
be an Auslander-Reiten sequence in the subcone $( \tau^{-1}M_{\mathcal{C}} \rightarrow)$ of the equal kernels cone $(M_{\mathcal{C}} \rightarrow)$. Then 
$$0 \rightarrow \iota_{B(n-1,r)} \tau N \rightarrow \iota_{B(n-1,r)} E \rightarrow \iota_{B(n-1,r)} N \rightarrow 0$$
is an Auslander-Reiten sequence in $\Gamma(n,r)$.
\end{prop}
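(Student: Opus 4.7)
The strategy is to lift the given Auslander-Reiten sequence via Ringel's Lemma \ref{lift}, applied to the one-point extension $B(n,r)=B(n-1,r)[M_{n,n-1}^{(r)}]$ established at the end of Section 3. Setting $M:=M_{n,n-1}^{(r)}$, Lemma \ref{lift} produces from the given AR-sequence $0\to\tau N\to E\to N\to 0$ an Auslander-Reiten sequence in $\modi B(n,r)$
\[ 0 \to {\begin{pmatrix}\tau N \\ H\end{pmatrix}}_{\id} \to {\begin{pmatrix}E \\ H\end{pmatrix}}_{f^*} \to \iota_{B(n-1,r)}N \to 0, \]
where $H:=\Hom_{B(n-1,r)}(M,\tau N)$. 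Its rightmost term is automatically $\iota_{B(n-1,r)}N$, while the middle and leftmost terms collapse to $\iota_{B(n-1,r)}E$ and $\iota_{B(n-1,r)}\tau N$ respectively exactly when $H=0$. Hence the proposition reduces to establishing the vanishing
\[ \Hom_{B(n-1,r)}(M_{n,n-1}^{(r)},\tau N)=0. \qquad (\star) \]

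For $(\star)$, the key observation is that since $\tau N$ lies strictly inside the subcone $(\tau^{-1}M_\mathcal{C}\to)$ rather than at the boundary $M_\mathcal{C}$, it is a proper $\tau^{-1}$-successor of the quasi-simple $M_\mathcal{C}$ and in particular $\tau N\in\EKP(n-1,r)\cap\mathcal{C}$. My plan is to combine the Auslander-Reiten formula with the self-duality $\D\tau\cong\tau^{-1}\D$ of $B(n-1,r)$ to convert $(\star)$ into a vanishing of $\Ext^1_{B(n-1,r)}(N, M_{n,n-1}^{(r)})$, or equivalently of $\Hom_{B(n-1,r)}(\D\tau N,\D M_{n,n-1}^{(r)})$ whose arguments now lie in $\EIP(n-1,r)$. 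The torsion-theoretic separation $\Hom(\EIP(n-1,r),\EKP(n-1,r))=0$ from \cite[2.2.3]{w12}, together with the dual of Remark \ref{equalzero} and the location of $\tau N$ strictly inside the subcone, should then be sufficient to force the desired vanishing. A complementary, more hands-on route is to exploit the explicit description of $M_{n,n-1}^{(r)}$ as (the restriction along $B(n-1,r)\hookrightarrow B(n,r)$ of) $\rad P(0)_{B(n,r)}$, and argue that no morphism from it into $\tau N$ can be compatible with the rigid graded layers that $\EKP$-membership forces on $\tau N$.

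The genuinely delicate step is the Hom-vanishing $(\star)$: since both $M_{n,n-1}^{(r)}$ and $\tau N$ sit in $\EKP(n-1,r)$, one cannot merely quote $\Hom(\EIP,\EKP)=0$, and the finer position of $\tau N$ as a proper $\tau^{-1}$-iterate of $M_\mathcal{C}$ inside the subcone, together with the specific structure of $M_{n,n-1}^{(r)}$, must be brought to bear. Once $(\star)$ is in hand, the conclusion follows immediately from Lemma \ref{lift}.
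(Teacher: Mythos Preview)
Your reduction is correct: by Lemma \ref{lift} everything comes down to the vanishing $(\star)$, i.e.\ $\Hom_{B(n-1,r)}(M_{n,n-1}^{(r)},\tau N)=0$. The gap is in how you propose to prove $(\star)$. Dualizing sends $\Hom(M_{n,n-1}^{(r)},\tau N)$ to $\Hom(\D\tau N,\,W_{n,n-1}^{(r)})$, and now \emph{both} arguments lie in $\EIP(n-1,r)$; the torsion-theoretic separation $\Hom(\EIP,\EKP)=0$ is therefore inapplicable, as you yourself note in your final paragraph. Passing to $\Ext^1(N,M_{n,n-1}^{(r)})$ has the same defect: both $N$ and $M_{n,n-1}^{(r)}$ are in $\EKP(n-1,r)$, and nothing in the torsion pair forces this Ext group to vanish. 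Your ``hands-on'' alternative is not developed enough to be an argument.

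The missing idea is to shift by $\tau$ rather than by $\D$. Since the sequence lies in the subcone $(\tau^{-1}M_{\mathcal{C}}\to)$, the left-hand term $\tau N$ already lies there, hence $\tau^2 N\in(M_{\mathcal{C}}\to)\subseteq\EKP(n-1,r)$; this is exactly where the subcone hypothesis enters. On the other side, $\tau M_{n,n-1}^{(r)}$ lies in $\EIP(n-1,r)$ when $r>2$ by \cite[2.11]{w12}, while for $r=2$ one has $\tau M_{n,n-1}^{(2)}\cong S(1)$ by (\ref{simple}); in either case $\tau M_{n,n-1}^{(r)}$ has no nonzero factor module in $\EKP(n-1,r)$, so $\Hom_{B(n-1,r)}(\tau M_{n,n-1}^{(r)},\tau^2 N)=0$ by the torsion pair. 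The Auslander--Reiten formula then gives
\[
\underline{\Hom}_{B(n-1,r)}(M_{n,n-1}^{(r)},\tau N)\;\cong\;\overline{\Hom}_{B(n-1,r)}(\tau M_{n,n-1}^{(r)},\tau^2 N)=0,
\]
and one still has to kill maps factoring through projectives. This last step is where your intuition about $M_{n,n-1}^{(r)}\cong\rad P(0)$ is actually useful: $M_{n,n-1}^{(r)}$ is generated by $e_0.M_{n,n-1}^{(r)}$, while $e_0.P(i)=0$ for $i>0$ and $\dim_k e_0.P(0)=1$, so $\Hom_{B(n-1,r)}(M_{n,n-1}^{(r)},P(i))=0$ for all $i$, whence $\Hom=\underline{\Hom}=0$.
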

\begin{proof}
In order to determine the lift of (\ref{n-1}) to $\modi B(n,r)$, we need to compute $$\Hom_{B(n-1,r)}(M_{n,n-1}^{(r)}, \tau N)$$ in view of Lemma \ref{lift}. We have $\tau^2 N \in \EKP(n-1,r)$, since $\tau N \in ( \tau^{-1}M_{\mathcal{C}} \rightarrow)$. For $r>2$, we have $\tau M_{n,n-1}^{(r)} \in \EIP(n-1,r)$ according to \cite[2.11]{w12}, whereas in view of (\ref{simple}) we have $\tau M_{n,n-1}^{(2)}\cong S(1) \in \modi B(n-1,2)$. In either case, $\tau M_{n,n-1}^{(r)}$ does not contain a non-trivial factor module with the equal kernels property and in view of \cite[2.2.3]{w12}, we obtain 
$$0 = \Hom_{B(n-1,r)}(\tau M_{n,n-1}^{(r)}, \tau^2 N).$$
The Auslander-Reiten formula yields an isomorphism of vector spaces
\begin{equation}\label{oben}
0 = \overline{\Hom}_{B(n-1,r)}(\tau M_{n,n-1}^{(r)}, \tau^2 N) \cong \underline{\Hom}_{B(n-1,r)}(M_{n,n-1}^{(r)}, \tau N).
\end{equation}
The module $M_{n,n-1}^{(r)}$ is indecomposable non-projective and generated by $e_0.M_{n,n-1}^{(r)}$ while $e_0.P(i)=0$ for $0 < i \leq n-1$ and furthermore $\dim_k e_0.P(0)=1$. Hence we have $\Hom_{B(n-1,r)}(M_{n,n-1}^{(r)},P(i))=0$ for all $0 \leq i \leq n-1$. In view of (\ref{oben}), we thus obtain
$\Hom_{B(n-1,r)} (M_{n,n-1}^{(r)}, \tau N)=0$ and due to Lemma \ref{lift}, the sequence (\ref{n-1}) lifts to the Auslander-Reiten sequence
$$
0 \rightarrow {\begin{pmatrix} \tau N \\ 0 \end{pmatrix}}_0 \rightarrow {\begin{pmatrix} E \\ 0 \end{pmatrix}}_0 \rightarrow {\begin{pmatrix} N \\ 0 \end{pmatrix}}_0 \rightarrow 0
$$
in $\modi B(n,r)$.
\end{proof}

Note that Proposition \ref{wmcomps} implies that the component $\hat{\mathcal{D}}_n$ of $\Gamma(n-1,r)$ that contains the module $M_{n,n-1}^{(r)}$ gives rise to a component  $\mathcal{D}_n$ of $\Gamma(n,r)$  containing the projective module $P(0) \cong M_{n,n}^{(r)} \in \modi B(n,r)$. We now prove our main result:

\begin{theorem}~
\begin{enumerate}[(i)]
\item If $(n,r) \neq (2,2)$, then the module $W_{m,n}^{(r)}$ is quasi-simple in a $\mathbb{Z}A_{\infty}$-component $\mathcal{C}_m$ of $\Gamma(n,r)$ which contains two non-empty disjoint cones $\EIP(n,r) \cap \mathcal{C}_m$ and $\EKP(n,r) \cap \mathcal{C}_m$. Moreover, all modules in $\mathcal{C}_m$ have constant Jordan type.
\item If $r=2$, then $\mathcal{W}(\mathcal{C}_m)=1$ and for $r>2$, we have $\mathcal{W}(\mathcal{C}_m)=0$.
\end{enumerate}
\end{theorem}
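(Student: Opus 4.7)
The proof proceeds by induction on $n$, using Proposition~\ref{wmcomps} to carry $\tau^{-1}$-images of generalized $W$- and $M$-modules across the one-point extension $B(n-1,r)\hookrightarrow B(n,r)$, and Proposition~\ref{liftcone} to lift Auslander--Reiten sequences out of equal-kernels subcones. The base cases $(n,r)=(2,r)$ for $r\geq 3$ and $(n,r)=(3,2)$ are available from Section~\ref{two} and Proposition~\ref{dreizwei}, respectively. In the inductive step I fix $m>n\geq 3$ with $(n,r)\neq(2,2)$ and assume the theorem for $(n-1,r)$.

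The first step is to see that $W:=W_{m,n}^{(r)}$ is regular. Since $W\in\EIP(n,r)$ while Proposition~\ref{wmcomps}(ii) identifies $\tau^{-1}W\cong\iota_{B(n-1,r)}\tau^{-1}W_{m-1,n-1}^{(r)}$, a non-zero module with vanishing $e_0$-component, Remark~\ref{equalzero}(i) places $\tau^{-1}W$ outside $\EIP(n,r)$. Combined with the $\tau$-closedness of $\EIP$ and its containing every preinjective \cite[2.2.3]{w12}, this excludes $W$ from being preprojective or preinjective, exactly as in Proposition~\ref{dreizwei}. Hence $W$ lies in a regular component $\mathcal{C}_m$ of $\Gamma(n,r)$.

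The main step is to graft the $\mathbb{Z}A_\infty$-structure of the inductive component $\hat{\mathcal{C}}\subseteq\Gamma(n-1,r)$ of $W_{m-1,n-1}^{(r)}$ (with EKP quasi-simple $M_{\hat{\mathcal{C}}}$) onto $\mathcal{C}_m$. Proposition~\ref{liftcone} lifts every AR sequence in the subcone $(\tau^{-1}M_{\hat{\mathcal{C}}}\to)$ via $\iota_{B(n-1,r)}$ to an AR sequence in $\Gamma(n,r)$, producing a translation-subquiver of $\Gamma(n,r)$ isomorphic to this truncated $\mathbb{Z}A_\infty$-cone whose vertices all lie in $\EKP(n,r)$ by \cite[3.32]{w13}. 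Proposition~\ref{wmcomps}(ii) identifies $\iota_{B(n-1,r)}\tau^{-1}W_{m-1,n-1}^{(r)}=\tau^{-1}W$, and the inductive value of $\mathcal{W}(\hat{\mathcal{C}})$ (zero or one) controls the distance from $\tau^{-1}W$ to the starting quasi-simple of the lifted cone, placing the cone inside $\mathcal{C}_m$. Thus $\EKP(n,r)\cap\mathcal{C}_m\neq\emptyset$, and Theorem~\ref{ZA} together with $\tau^{-1}W\notin\EIP$ forces $W=W_{\mathcal{C}_m}$ to be the EIP quasi-simple; the two cones are automatically disjoint because $\EIP$ and $\EKP$ have trivial intersection on indecomposables (via the torsion pair of \cite[2.2.3]{w12}). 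The presence of a truncated $\mathbb{Z}A_\infty$-cone inside the stable, non-periodic component $\mathcal{C}_m$ forces the tree class of $\mathcal{C}_m$ to be $A_\infty$. Constant Jordan type on the two cones is immediate from $\EIP\cup\EKP\subseteq\CJT$; it extends to the at-most-one middle quasi-simple and to the higher-quasi-length modules through $\iota_{B(n-1,r)}$ (which respects generic $\alpha$-ranks) and the inductive hypothesis that every module of $\hat{\mathcal{C}}$ is CJT.

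For part~(ii), the position of the EKP-cone inside $\mathcal{C}_m$ is dictated by $\mathcal{W}(\hat{\mathcal{C}})$. If $r>2$, then $\mathcal{W}(\hat{\mathcal{C}})=0$ gives $\tau^{-1}W_{m-1,n-1}^{(r)}=M_{\hat{\mathcal{C}}}\in\EKP(n-1,r)$, so $\tau^{-1}W=\iota_{B(n-1,r)}M_{\hat{\mathcal{C}}}\in\EKP(n,r)$ by \cite[3.32]{w13}, giving $\mathcal{W}(\mathcal{C}_m)=0$. If $r=2$, the unique middle quasi-simple $\tau^{-1}W_{m-1,n-1}^{(2)}$ of $\hat{\mathcal{C}}$ lies in neither $\EIP$ nor $\EKP$; both failures persist under $\iota_{B(n-1,r)}$ (Remark~\ref{equalzero}(i) and \cite[3.32]{w13}), so $\tau^{-1}W$ is the middle quasi-simple of $\mathcal{C}_m$, while $\tau^{-2}W=\iota_{B(n-1,r)}M_{\hat{\mathcal{C}}}\in\EKP(n,2)$ yields $\mathcal{W}(\mathcal{C}_m)=1$. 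The main technical obstacle lies in justifying the commutation $\tau^{-1}\iota_{B(n-1,r)}=\iota_{B(n-1,r)}\tau^{-1}$ at the left boundary of the subcone $(\tau^{-1}M_{\hat{\mathcal{C}}}\to)$, precisely where the hypothesis of Proposition~\ref{liftcone} begins to fail; here one falls back on a direct application of Lemma~\ref{lift}, verifying that $\Hom_{B(n-1,r)}(M_{n,n-1}^{(r)},\tau M_{\hat{\mathcal{C}}})=0$ even in this boundary case, which is accessible through the explicit polynomial-ring description of the generalized $W$- and $M$-modules.
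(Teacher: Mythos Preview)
Your overall architecture---induction on $n$ via the one-point extension, base cases as stated, and lifting the equal-kernels subcone through Proposition~\ref{liftcone}---matches the paper's. The substantive difference is that the paper first \emph{dualizes}: it proves the statement for the component $\mathcal{D}_m=\D\mathcal{C}_m$ of $M_{m,n}^{(r)}$ and only at the end passes back via $\D$. This is not merely cosmetic; it is precisely what closes the gap you flag as ``the main technical obstacle''.

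On the $M$-side the inductive component is $\hat{\mathcal{D}}_m\subset\Gamma(n-1,r)$ containing $M_{m,n-1}^{(r)}$, and by the inductive hypothesis the EKP quasi-simple of $\hat{\mathcal{D}}_m$ is $M_{m,n-1}^{(r)}$ \emph{itself}. Proposition~\ref{wmcomps}(i) then gives $\tau^{-1}_{B(n,r)}M_{m,n}^{(r)}\cong\iota_{B(n-1,r)}\tau^{-1}_{B(n-1,r)}M_{m,n-1}^{(r)}$, which is exactly the leftmost quasi-simple of the lifted subcone $(\tau^{-1}M_{\hat{\mathcal{D}}_m}\to)$. So $M_{m,n}^{(r)}$ is automatically adjacent to the lifted cone; no extra boundary commutation $\tau^{-1}\iota\cong\iota\tau^{-1}$ is needed beyond what Proposition~\ref{wmcomps}(i) already supplies.

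On your $W$-side, by contrast, the inductive component $\hat{\mathcal{C}}$ contains $W_{m-1,n-1}^{(r)}$, and its EKP quasi-simple $M_{\hat{\mathcal{C}}}$ is $\tau^{-1}W_{m-1,n-1}^{(r)}$ (for $r>2$) or $\tau^{-2}W_{m-1,n-1}^{(r)}$ (for $r=2$). These are \emph{not} generalized $M$-modules and have no explicit polynomial-ring description, so your proposed remedy---``verifying that $\Hom_{B(n-1,r)}(M_{n,n-1}^{(r)},\tau M_{\hat{\mathcal{C}}})=0$\ldots accessible through the explicit polynomial-ring description''---does not go through. Moreover, the Hom-space you name is not the correct one: to conclude $\tau^{-1}\bigl(\iota M_{\hat{\mathcal{C}}}\bigr)\cong\iota\bigl(\tau^{-1}M_{\hat{\mathcal{C}}}\bigr)$ via Lemma~\ref{lift} you would need $\Hom_{B(n-1,r)}(M_{n,n-1}^{(r)},M_{\hat{\mathcal{C}}})=0$, and the argument of Proposition~\ref{liftcone} fails here because $\tau M_{\hat{\mathcal{C}}}\notin\EKP(n-1,r)$, while the torsion pair gives no vanishing in the direction $\EKP\to\EIP$. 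Without this link, the lifted cone is not shown to lie in $\mathcal{C}_m$ at all, and the remainder of your argument (tree class, $\mathcal{W}(\mathcal{C}_m)$, CJT) hangs on it.

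The fix is to dualize at the outset, as the paper does: work with $M_{m,n}^{(r)}$, invoke Proposition~\ref{wmcomps}(i), and let Proposition~\ref{liftcone} handle only the interior of the cone. The paper then obtains $\mathcal{W}(\mathcal{D}_m)$ not from the inductive value of $\mathcal{W}(\hat{\mathcal{D}}_m)$ but directly: for $r>2$ from \cite[2.11]{w12} giving $\tau M_{m,n}^{(r)}\in\EIP(n,r)$, and for $r=2$ from the explicit formula~\eqref{zwei} for $\tau_{B(n,2)}M_{m,n}^{(2)}$ together with \cite[2.2.6]{w12}.
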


\begin{proof}
We consider the dual component $\mathcal{D}_m=\D \mathcal{C}_m$ containing the module $\D W_{m,n}^{(r)}= M_{m,n}^{(r)}$. In view of the duality between $\EIP(n,r)$ and $\EKP(n,r)$, the compatibility between $\tau$ and $\D$ and in view of the fact that the notion of constant Jordan type is self-dual \cite[3.15]{w13}, it suffices to prove the assertion for $\mathcal{D}_m$.

As mentioned in Section \ref{two}, the statement holds for $n=2$ and $r>2$ and due to Proposition \ref{dreizwei} for $(n,r)=(3,2)$. Now let $n \geq n(r)$ with $n(2)=4$ and $n(r)=3$ if $r>2$ and assume that the statement is true for $n-1$. In view of Proposition \ref{wmcomps}, the regular $\mathbb{Z}A_{\infty}$-component $\hat{\mathcal{D}}_m$ of $\Gamma(n-1,r)$ containing the quasi-simple module $M_{\hat{\mathcal{D}}_m}=M_{m,n-1}^{(r)}$ lifts to the component $\mathcal{D}_m$ in which $M_{m,n}^{(r)}$ is quasi-simple and the cone $(\tau^{-1}M_{m,n}^{(r)} \rightarrow)$ coincides via $\iota_{B(n-1,r)}$ with the cone $(\tau^{-1} M_{m,n-1}^{(r)} \rightarrow) \subseteq \hat{\mathcal{D}}_m$ by Proposition \ref{liftcone}.\\
Let $\mathcal{D}=\left\{ \tau^k X \mid X \in (M_{m,n}^{(r)} \rightarrow),\; k \in \mathbb{Z}\right\} \subseteq \mathcal{D}_m$. Due to the fact that $m>n$, $\mathcal{D}$ does not contain $P(0)$ and hence $\mathcal{D}$ does not contain any projective vertices since all modules in $(\tau^{-1} M_{m,n-1}^{(r)} \rightarrow)$ are regular $B(n-1,r)$-modules. Furthermore, $\mathcal{D}$ is $\tau$-stable as well as $\tau^{-1}$-stable and for $X \in \mathcal{D}$, we have $Y \in \mathcal{D}$ if there is an irreducible map $X \rightarrow Y$ or $Y \rightarrow X$. Since $\mathcal{D}_m$ is connected we have $\mathcal{D}_m=\mathcal{D}$ while $\mathcal{D}_m$ is of type $\mathbb{Z}A_{\infty}$ since it is induced by the cone $(\tau^{-1}M_{m,n}^{(r)} \rightarrow)$.\\
According to \cite[2.11]{w12}, we have $\tau M_{m,n}^{(r)} \in \EIP(n,r)$ in case $r>2$. In view of (\ref{zwei}), we have $\tau_{B(n,2)}M_{m,n}^{(2)}\cong \D \iota_{B(n-1,2)}\cdots \iota_{B(2,2)}W_{m-n,2}^{(2)}$. This implies that $\tau_{B(n,2)}M_{m,n}^{(2)} \notin \EIP(n,2)$ due to the fact that $\D W_{m-n,2}^{(2)} \in \EKP(2,2)$. However, since $\D W_{m-n,2}^{(2)} \in \CJT(2,2)$, we obtain that $\D \iota_{B(n-1,2)}\cdots \iota_{B(2,2)}W_{m-n,2}^{(2)} \in \CJT(n,2)$. Moreover, we have $e_{n-1}.\tau_{B(n,2)}M_{m,n}^{(2)}=0$ which implies that $\tau_{B(n,2)}M_{m,n}^{(2)} \notin \EKP(n,2)$ by Remark \ref{equalzero} and that $\tau_{B(n,2)}^2M_{m,n}^{(2)} \in \EIP(n,2)$ by \cite[2.2.6]{w12}. Since in either case all quasi-simple modules in $\mathcal{D}_m$ are of constant Jordan type, we have $\mathcal{D}_m \subset \CJT(n,r)$ by \cite[3.27]{w13}. The foregoing observations yield that $\mathcal{W}(\mathcal{D}_m)=0$ if $r>2$  and $\mathcal{W}(\mathcal{D}_m)=1$ if $r=2$.
\end{proof}

The distribution of equal images and equal kernels modules for $r>2$ is hence as in Fig. 1.

\section*{Acknowledgements}
The results of this paper are part of my doctoral thesis which I have written at the University of Kiel. I thank my advisor Rolf Farnsteiner for his continuous support and for helpful remarks on a preliminary version of this paper.

\addcontentsline{toc}{section}{References}
\bibliography{lit}{}
\bibliographystyle{plain} 
\end{document}